\newtheorem{theorem}{Theorem}
\newtheorem{lemma}[theorem]{Lemma}
\newtheorem{proposition}[theorem]{Proposition}
\newtheorem{corollary}[theorem]{Corollary}
\theoremstyle{definition}
\newtheorem{definition}[theorem]{Definition}
\newtheorem{example}[theorem]{Example}
\theoremstyle{remark}
\newtheorem*{oexercise}{Optimization Exercise}
\newcommand{\R}{{\mathbb R}}
\newcommand{\Q}{{\mathbb Q}}
\newcommand{\Z}{{\mathbb Z}}
\newcommand{\CC}{{\mathcal C}}
\newcommand{\VV}{{\mathcal V}}
\newcommand{\conv}{\mathop\mathrm{conv}}
\newcommand{\vv}{v}
\newcommand{\vb}{\bar v}
\newcommand{\vbb}{\bar {\bar v}}
\title{Non-rational configurations, polytopes, and surfaces}
\author{\Large G\"unter M. Ziegler\\
{Inst.\ Mathematics, MA 6-2, TU Berlin}\\
{D-10623 Berlin, Germany}\\
\url{ziegler@math.tu-berlin.de}} 
\date{October 24, 2007; revised November 16, 2007}
\begin{document}

\maketitle

\begin{center}
  dedicated to Micha Perles
\end{center}

\begin{multicols}{2}
The two most interesting ``platonic solids'', the
regular dodecahedron 
\begin{center}
  \includegraphics[height=50mm]{dodecahedron}
\end{center}
and the regular icosahedron,
\begin{center}
  \includegraphics[height=50mm]{icosahedron}
 \end{center}
necessarily have \emph{irrational vertex coordinates}.

Indeed, they involve regular pentagons, as faces
(in the dodecahedron), or given by the five neighbors of any vertex
(for the icosahedron); and a regular pentagon cannot be realized with
rational coordinates, since the diagonals intersect each other in 
the ratio $\tau:1$ known as the ``golden section'' \cite[p.~30]{Coxeter},
where $\tau=\frac12(1+\sqrt5)$.

However, the dodecahedron and the icosahedron can be realized
with rational coordinates if we do not require them to be precisely regular:
If you perturb the vertices of a regular icosahedron ``just a bit''
into rational position, then taking the convex hull will 
yield a rational polytope that is \emph{combinatorially equivalent}
to the regular icosahedron.
Similarly, by perturbing the facet planes of a regular dodecahedron
a bit we obtain a dodecahedron with rational coordinates.

Indeed, \emph{every} combinatorial type of $3$-dimensional polytope 
can be realized with rational coordinates.
For simplicial polytopes such as the icosahedron, where all faces are
triangles, this can be achieved
by perturbing vertex coordinates. For simple polytopes such as the 
dodecahedron, where all vertices have degree three, we can 
perturb the planes spanned by faces into rational position
(that is, until the planes have equations with rational coefficients).
For the case of general 3-polytopes, which may be neither simple
nor simplicial, the result is not obvious, but we get it
as an easy consequence of Steinitz's proof for
his (deep) theorem \cite{Stei1,StRa} \cite[Lect.~4]{Z35} that
every 3-connected planar graph is the graph of a convex polytope.

In view of this, it is a surprising and perhaps counter-intuitive
discovery, made by Micha Perles in the sixties, that in high
dimensions there are inherently \emph{non-rational} combinatorial
types of polytopes: Specifically, Perles constructed an $8$-dimensional
polytope with $12$ vertices 
that can be realized with vertex coordinates in $\Q[\sqrt5]$, but not
with rational coordinates.  His construction was given in terms of
``Gale diagrams'', which he introduced and developed into a powerful
tool for the analysis of polytopes with ``few vertices'', that is,
$d$-dimensional polytopes with $d+b$ vertices for small~$b$.%
\footnote{Micha
Perles, a professor of mathematics at Hebrew University in Jerusalem
who just retired, is
a remarkable mathematician who has published very little, but
contributed a number of brilliant ideas, concepts, and proofs.
His theory of Gale diagrams, as well as his construction of non-rational
polytopes, were first published in the 1967 first edition of Branko
Gr\"unbaum's book ``Convex Polytopes'' \cite{Gr1-2}.
(See \cite{perles84:_at_e} or \cite[Chap.~13]{Z58-3} for another gem.)}

Gale diagrams are a duality theory:
They involve the passage to a space of complementary dimension
(for a $d$-polytope with $n$ vertices one arrives 
at an investigation in $\R^{n-d-1}$), and so 
the polytopes produced by Perles' construction are hard to ``visualize''.
However,  it was later found that non-rational
polytopes may be generated from planar (non-rational)
incidence configurations in a number of different ways,
the simplest of which are ``Lawrence extensions''.
These were discovered and used, but not published, in 1980 by Jim Lawrence,
then at the University of Kentucky; they first appeared in print in a paper
by Billera \& Munson~\cite{BiMu} on oriented matroids.
Lawrence extensions may be described via two dualization processes,
but two dualizations are as good as none: and so we
arrive at a ``direct'' construction in primal space~\ldots

As you will see below, granted that non-rational point configurations
in the plane exist (which we will see), Lawrence
extensions are almost trivial to perform, and quite easy to analyze.

One might try to attribute all this to the fact that
``high-dimensional geometry is weird''. However, although there is some
truth to this claim, the fact that non-rational planar incidence configurations
lead to non-rational geometric structures may also be seen in
other instances. So, we sketch a construction by
Ulrich Brehm, announced in 1997 \cite{brehmOW2} but not yet published in full, yet,
which shows that there are geometric objects in $\R^3$ (namely,
certain polyhedral surfaces) that are intrinsically non-rational.

Constructing instances of non-rational polytopes, or of
non-rational surfaces, is not hard with the techniques we have at hand.
Since the analysis and proofs become quite easy if we work with homogeneous
coordinates (that is, in projective geometry), we will review this 
tool first; note that it is not used in the constructions.

Much harder work --- both in the careful statement of the results, and
in the proofs of the theorems --- is needed if one is striving
for so-called universality theorems; these say that the configuration
spaces of various geometric objects ``are arbitrarily wild''.
We will have a brief discussion later in this paper,
before we 
end with major open problems.

\begin{small}
\subsection*{Acknowledgements.}
Thanks to Volker Kaibel for the discussions and joint drafts
on the path to this article, to Nikolaus Witte for many comments
and some of the pictures, to John M. Sullivan and Peter McMullen for careful
and insightful readings, to Ravi Vakil and Michael Kleber for 
their encouragement and guidance on the way towards publication in
the \textsl{Math.\ Intelligencer}, and in particular to Ulrich Brehm for his 
permission to report about his mathematics ``to be published''.
\end{small}

\section*{Homogeneous coordinates and\\ projective transformations}

An abstract \emph{configuration} is given by a set $\{p_1,\dots,p_n\}$ of 
$n$ elements (``points'') and by a list which says which
triples of points should be \emph{collinear} (and that the others shouldn't).
A \emph{realization} of the configuration is given by $n$ points
$w_1=(x_1,y_1)$, \ldots, $w_n=(x_n,y_n)\in\R^2$ that satisfy the conditions,
under the correspondence $p_i\leftrightarrow w_i$:
The points $w_i$, $w_j$, and $w_k$ should be collinear
exactly if this had been dictated for $p_i,p_j,p_k$. 
``Being collinear'' is a linear
algebra condition for $w_i,w_j,w_k$: The points $w_i$, $w_j$, and $w_k$
need to lie on a line, that is,
be \emph{affinely dependent}. Equivalently, the vectors
 $(1,x_i,y_i)$, $(1,x_j,y_j)$, $(1,x_k,y_k)\in\R^3$ need to be
\emph{linearly dependent}, that is, have determinant zero.
Every realization by points $w_i$ in~$\R^2$ corresponds to a
realization by vectors $v_i:=(1,x_i,y_i)$ in~$\R^3$. These coordinates
with a first coordinate $1$ prepended are referred to as 
\emph{homogeneous coordinates}.

All of what follows in this paper could in principle be discussed 
(and computed) in affine coordinates --- it would just be much 
more complicated. 

A key observation is now that linear independence is not affected if we
replace any one of the vectors $v_i\in\R^3$ by a non-zero multiple.
\\
Here are four fundamental facts.
\begin{compactitem}[$\bullet$]
\item Any realization by points $w_i\in\R^2$ and specified affinely
  dependent triples yields a realization by vectors $v_i\in\R^3$
  with specified linearly dependent triples: just pass to homogeneous
  coordinates.
\item Conversely, any ``linear'' realization by vectors $v_i\in\R^3$ can be
  converted into an ``affine'' realization by points in~$\R^2$, by
  \emph{dehomogenization}: Find a plane $a t + b x+ cy=1$ that is not
  parallel to any one of the vectors, and rescale the vectors to lie
  on the plane.  (That is, find a linear function $\ell(t,x,y)=a t + b
  x+ cy$ that does not vanish on any one of the vectors, and then
  replace $v_i$ by $\frac1{\ell(v_i)}v_i$.)
\item Invertible linear transformations on $\R^3$ correspond to 
  \emph{projective transformations} in the plane~$\R^2$.
\item Any four vectors $v_1,v_2,v_3,v_4\in\R^3$ such that no three of
  them are linearly dependent form a \emph{projective basis}: There is a unique
  projective transformation that maps them to $e_1$, $e_2$, $e_3$, and
  $e_1+e_2+e_3$, that is, a linear transformation that maps them to
  non-zero multiples of these four vectors.  Indeed, if
  $v_4=\alpha_1v_1+\alpha_2v_2+\alpha_3v_3$ with nonzero $\alpha_i$,
  then consider $\{\alpha_1v_1,\alpha_2v_2,\alpha_3v_3\}$
  as a basis and let the linear transformation map $\alpha_iv_i$ to~$e_i$.
\end{compactitem}
Clearly, the concepts of homogenization, dehomogenization, projective
transformations, and projective bases work analogously also for higher
dimensions. It's elementary real linear algebra.
\medskip

For the study of convex polytopes
it is also advantageous to treat them in homogenous
coordinates.  However, here convexity is important, and thus we
have to insist on the use of \emph{positive} rather than non-zero
coefficients/\allowbreak multiples throughout.
\[
\includegraphics{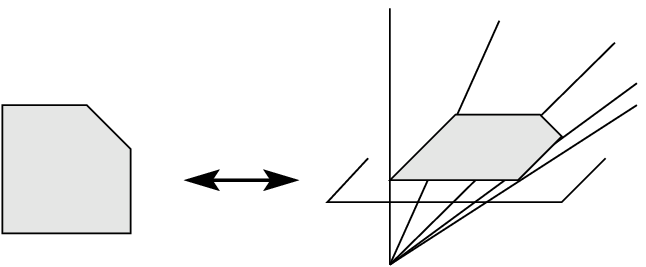}
\unitlength1cm
\begin{picture}(0,0)
\put(-6.4,.8){$P$}
\put(-1.5,2){$C_P$}
\put(-.6,.55){$t=1$}
\end{picture}
\]
In this setting, \emph{homogenization} is the
passage from a $d$-dimensional convex polytope
$P\subset\R^d$ with $n$ vertices to a $(d+1)$-dimensional pointed
convex polyhedral cone $C_P\subset\R^{d+1}$ with $n$ extreme rays.
More generally, any $k$-dimensional face of~$P$ corresponds to a
non-empty $(k+1)$-dimensional face of the cone $C_P$, and is thus 
supported by a \emph{linear} hyperplane 
through the origin, which is the apex of~$C_P$.
Dehomogenization allows us to pass back from any 
$(d+1)$-dimensional pointed polyhedral
cone to a $d$-polytope.  Moreover, rational $d$-polytopes correspond
to rational $(d+1)$-cones, and conversely(!). (See
e.g.\ \cite[Sect.~2.6]{Z35} for a more detailed discussion.)

\section*{Non-rational configurations}

The insight that there are ``abstract'' combinatorial incidence
configurations that may be geometrized with real, but not with
rational coordinates is rooted deep in the history of projective
geometry.  (A reason for this is that addition and multiplication can
be modelled by incidence configurations, via the von Staudt
constructions
\cite[2. Heft, 1857]{staudt56:_beitr_geomet_lage}; thus,
polynomial equations can be encoded into point configurations.
This mechanism was studied already very early in the 
framework of matroid theory, starting with MacLane's fundamental
1936 paper \cite{maclane36:_some},
\cite[pp.~147-151]{kung86:SourceBook}, 
where an eleven-point example was described.
See Kung \cite[Sect.~II.1]{kung86:SourceBook} for details and
further references.)
As suggested by Perles, 
let us look more closely at the regular pentagon.

\begin{example}\label{ex:pentagon}
  The \emph{extended pentagon configuration} $\CC_{11}$ is an abstract
  configuration on eleven points $p_1,\dots,p_{11}$: 
  Its collinear triples 
  $\{p_1,p_2,p_7\}$, $\{p_1,p_3,p_8\}$, \ldots\ may be read off from the 
  collinearities among the five vertices of a regular
  pentagon, the five intersection points of its diagonals, and the
  center.  There are ten lines that contain more than two
  of these points: The five diagonals of the pentagon contain four
  points each, while the five lines of symmetry contain three.
\end{example}

\begin{center}
\begin{picture}(0,0)%
\includegraphics{pentagon1b.pstex}%
\end{picture}%
\setlength{\unitlength}{987sp}%
\begingroup\makeatletter\ifx\SetFigFont\undefined%
\gdef\SetFigFont#1#2#3#4#5{%
  \reset@font\fontsize{#1}{#2pt}%
  \fontfamily{#3}\fontseries{#4}\fontshape{#5}%
  \selectfont}%
\fi\endgroup%
\begin{picture}(14953,13002)(226,-9562)
\put(7326,2984){\makebox(0,0)[lb]{\smash{{\SetFigFont{10}{12.0}{\rmdefault}{\mddefault}{\updefault}{\color[rgb]{0,0,0}$p_2$}%
}}}}
\put(13641,-1711){\makebox(0,0)[lb]{\smash{{\SetFigFont{10}{12.0}{\rmdefault}{\mddefault}{\updefault}{\color[rgb]{0,0,0}$p_3$}%
}}}}
\put(11256,-9286){\makebox(0,0)[lb]{\smash{{\SetFigFont{10}{12.0}{\rmdefault}{\mddefault}{\updefault}{\color[rgb]{0,0,0}$p_4$}%
}}}}
\put(3336,-9391){\makebox(0,0)[lb]{\smash{{\SetFigFont{10}{12.0}{\rmdefault}{\mddefault}{\updefault}{\color[rgb]{0,0,0}$p_5$}%
}}}}
\put(6606,-6931){\makebox(0,0)[lb]{\smash{{\SetFigFont{10}{12.0}{\rmdefault}{\mddefault}{\updefault}{\color[rgb]{0,0,0}$p_7$}%
}}}}
\put(4161,-4846){\makebox(0,0)[rb]{\smash{{\SetFigFont{10}{12.0}{\rmdefault}{\mddefault}{\updefault}{\color[rgb]{0,0,0}$p_8$}%
}}}}
\put(5166,-1336){\makebox(0,0)[rb]{\smash{{\SetFigFont{10}{12.0}{\rmdefault}{\mddefault}{\updefault}{\color[rgb]{0,0,0}$p_9$}%
}}}}
\put(8556,-1216){\makebox(0,0)[lb]{\smash{{\SetFigFont{10}{12.0}{\rmdefault}{\mddefault}{\updefault}{\color[rgb]{0,0,0}$p_{10}$}%
}}}}
\put(9711,-4771){\makebox(0,0)[lb]{\smash{{\SetFigFont{10}{12.0}{\rmdefault}{\mddefault}{\updefault}{\color[rgb]{0,0,0}$p_{11}$}%
}}}}
\put(226,-1111){\makebox(0,0)[lb]{\smash{{\SetFigFont{10}{12.0}{\rmdefault}{\mddefault}{\updefault}{\color[rgb]{0,0,0}$p_6$}%
}}}}
\put(7056,-4456){\makebox(0,0)[lb]{\smash{{\SetFigFont{10}{12.0}{\rmdefault}{\mddefault}{\updefault}{\color[rgb]{0,0,0}$p_1$}%
}}}}
\end{picture}%

\end{center}

\noindent
Now we want to ``realize'' this configuration in the rational plane, that is,
find rational coordinates for all the eleven points, such that the collinearities
given by the ten lines are satisfied --- and such that the configuration does
not ``collapse'', that is, no further collinearities should occur.
(We will not check that latter condition in detail, but it is important:
In view of the next lemma check that there are rational
coordinates for the eleven points that satisfy all ten collinearities,
for example given by eleven distinct points on one line, or with the eleven points placed
at the vertices of a triangle.)

\begin{lemma}\label{lemma:pentagon}
  The eleven-point configuration of Example~\ref{ex:pentagon} can be
  realized with coordinates in $\Q[\sqrt5]$, but not with rational
  coordinates.
\end{lemma}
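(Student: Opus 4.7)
The plan is to handle the two halves of the statement separately. For realizability over $\Q[\sqrt{5}]$, it suffices to take a regular pentagon: its five vertices, the five intersection points of its diagonals, and its center all have coordinates in $\Q[\sqrt{5}]$, expressible via the golden ratio $\tau=(1+\sqrt{5})/2$. All ten prescribed collinearities are then built in, and the non-collinearities may be read off from the standard picture.

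For non-realizability over $\Q$, I argue by contradiction using the homogeneous framework just reviewed. Suppose a rational realization exists, and label the pentagon vertices $v_1,v_2,v_3,v_4,v_5$ in cyclic order. The combinatorics of $\CC_{11}$ guarantees that no three of $v_1,v_2,v_3,v_5$ are collinear, so they form a projective basis; hence there is a rational projective transformation sending them to the standard frame $[1{:}0{:}0]$, $[1{:}1{:}1]$, $[0{:}1{:}0]$, $[0{:}0{:}1]$. Since the transformation is rational, all eleven points remain rational. Write $v_4=[a{:}b{:}c]$ in these coordinates.

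Now the five diagonals are explicit lines, and each of the five inner-pentagon intersection points is an intersection of two of these, computable as a linear expression in $a,b,c$; for instance $v_1v_3\cap v_2v_4=[a-c:b-c:0]$. The center $O$ is pinned down by any two of the five ``symmetry lines'' (each joining a pentagon vertex to the inner vertex opposite to it, i.e.\ the unique inner vertex not lying on either diagonal through that vertex), and the concurrence of the remaining three symmetry lines at $O$ imposes three homogeneous equations on $a,b,c$. Grinding them out, I expect the system to reduce to $a=b+c$ together with $c^2+bc-b^2=0$; the positive root gives $c/b=(\sqrt{5}-1)/2=1/\tau$ and hence $a/b=\tau$. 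Since $\tau$ is irrational, $v_4$ cannot have rational homogeneous coordinates, contradicting our assumption. The main obstacle is the bookkeeping --- correctly identifying which inner vertex lies on the symmetry line through each $v_i$, and then reducing the three incidence equations to the single quadratic that traps the golden ratio.
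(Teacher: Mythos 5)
Your proposal is correct and follows essentially the same route as the paper: pass to homogeneous coordinates, normalize four points to a projective frame by a rational transformation, and reduce the remaining incidences to a quadratic that traps the golden ratio. The paper merely chooses a slightly tighter frame ($v_1,v_2,v_9,v_{10}$, forcing the next point to have the one-parameter form $(1,a,0)$ and ending in $a^2-4a-1=0$), whereas you keep two parameters $[a{:}b{:}c]$ for $v_4$ and use concurrency of the symmetry lines; your announced system ($ac=b^2$ plus the concurrency conditions, equivalently $a=b+c$ and $c^2+bc-b^2=0$) does indeed check out, so the ``expected'' reduction is correct, and your use of the regular pentagon for the $\Q[\sqrt5]$ half is a clean alternative to the paper's observation that $a=2\pm\sqrt5\in\Q[\sqrt5]$.
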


\begin{proof}
The calculation for this lemma is most easily done in terms of
homogeneous coordinates. 

In a {vector realization} $v_1,\dots,v_{11}\in\R^3$,
no three of the four vectors $v_1,v_2,v_9,v_{10}$ can be coplanar:
These four vectors form a projective basis.
Thus we can assume that they have, for example, the coordinates $v_1=(1,0,-1)$, 
$v_2=(1,0,1)$, $v_9=(1,-1,0)$, and $v_{10}=(1,1,0)$.
Furthermore, $v_3$ will have homogeneous coordinates $(1,a,0)$ for some
parameter $a\in\R{\setminus}\{-1,+1\}$ that we need to determine.
Now it is easy (exercise!) to derive coordinates for the other vectors
and equations for the lines they span, for example in the order
$\ell_1:x_2=0$, $\ell_2:x_1=0$, $\ell_3$, $\ell_4$, $\ell_5$, $\ell_6$,
$v_4=(0,1,-1)$, $v_5$, $\ell_7$, and then
$v_7=(1,0,-a)$, $v_8=(1-a,2a,1+a)$. Finally, the condition that
$v_4,v_7$ and $v_8$ need to be linearly dependent leads to the 
determinant equation $a^2-4a-1=0$, that is, $a=2\pm\sqrt5$.
\begin{center}
\begin{picture}(0,0)%
\includegraphics{pentagon2b.pstex}%
\end{picture}%
\setlength{\unitlength}{987sp}%
\begingroup\makeatletter\ifx\SetFigFont\undefined%
\gdef\SetFigFont#1#2#3#4#5{%
  \reset@font\fontsize{#1}{#2pt}%
  \fontfamily{#3}\fontseries{#4}\fontshape{#5}%
  \selectfont}%
\fi\endgroup%
\begin{picture}(14215,12960)(151,-9544)
\put(10491,-7276){\makebox(0,0)[lb]{\smash{{\SetFigFont{10}{12.0}{\rmdefault}{\mddefault}{\updefault}{\color[rgb]{0,0,0}$\ell_7$}%
}}}}
\put(5251,-1336){\makebox(0,0)[rb]{\smash{{\SetFigFont{10}{12.0}{\rmdefault}{\mddefault}{\updefault}{\color[rgb]{0,0,0}$(1,-1,0)$}%
}}}}
\put(8556,-1216){\makebox(0,0)[lb]{\smash{{\SetFigFont{10}{12.0}{\rmdefault}{\mddefault}{\updefault}{\color[rgb]{0,0,0}$(1,1,0)$}%
}}}}
\put(12151,-1261){\makebox(0,0)[lb]{\smash{{\SetFigFont{10}{12.0}{\rmdefault}{\mddefault}{\updefault}{\color[rgb]{0,0,0}$(1,a,0)$}%
}}}}
\put(7326,2984){\makebox(0,0)[lb]{\smash{{\SetFigFont{10}{12.0}{\rmdefault}{\mddefault}{\updefault}{\color[rgb]{0,0,0}$v_2=(1,0,1)$}%
}}}}
\put(12151,-661){\makebox(0,0)[lb]{\smash{{\SetFigFont{10}{12.0}{\rmdefault}{\mddefault}{\updefault}{\color[rgb]{0,0,0}$v_3=$}%
}}}}
\put(11256,-9286){\makebox(0,0)[lb]{\smash{{\SetFigFont{10}{12.0}{\rmdefault}{\mddefault}{\updefault}{\color[rgb]{0,0,0}$v_4$}%
}}}}
\put(3336,-9391){\makebox(0,0)[lb]{\smash{{\SetFigFont{10}{12.0}{\rmdefault}{\mddefault}{\updefault}{\color[rgb]{0,0,0}$v_5$}%
}}}}
\put(6606,-6931){\makebox(0,0)[lb]{\smash{{\SetFigFont{10}{12.0}{\rmdefault}{\mddefault}{\updefault}{\color[rgb]{0,0,0}$v_7$}%
}}}}
\put(4161,-4846){\makebox(0,0)[rb]{\smash{{\SetFigFont{10}{12.0}{\rmdefault}{\mddefault}{\updefault}{\color[rgb]{0,0,0}$v_8$}%
}}}}
\put(4576,-736){\makebox(0,0)[rb]{\smash{{\SetFigFont{10}{12.0}{\rmdefault}{\mddefault}{\updefault}{\color[rgb]{0,0,0}$v_9=$}%
}}}}
\put(8551,-586){\makebox(0,0)[lb]{\smash{{\SetFigFont{10}{12.0}{\rmdefault}{\mddefault}{\updefault}{\color[rgb]{0,0,0}$v_{10}=$}%
}}}}
\put(9711,-4771){\makebox(0,0)[lb]{\smash{{\SetFigFont{10}{12.0}{\rmdefault}{\mddefault}{\updefault}{\color[rgb]{0,0,0}$v_{11}$}%
}}}}
\put(151,-1186){\makebox(0,0)[lb]{\smash{{\SetFigFont{10}{12.0}{\rmdefault}{\mddefault}{\updefault}{\color[rgb]{0,0,0}$v_6$}%
}}}}
\put(9336,-2161){\makebox(0,0)[lb]{\smash{{\SetFigFont{10}{12.0}{\rmdefault}{\mddefault}{\updefault}{\color[rgb]{0,0,0}$\ell_1$}%
}}}}
\put(8946,-6241){\makebox(0,0)[lb]{\smash{{\SetFigFont{10}{12.0}{\rmdefault}{\mddefault}{\updefault}{\color[rgb]{0,0,0}$\ell_3$}%
}}}}
\put(4686,-6376){\makebox(0,0)[rb]{\smash{{\SetFigFont{10}{12.0}{\rmdefault}{\mddefault}{\updefault}{\color[rgb]{0,0,0}$\ell_4$}%
}}}}
\put(6901,-4486){\makebox(0,0)[b]{\smash{{\SetFigFont{10}{12.0}{\rmdefault}{\mddefault}{\updefault}{\color[rgb]{0,0,0}$v_1=(1,0,-1)$}%
}}}}
\put(7041,434){\makebox(0,0)[lb]{\smash{{\SetFigFont{10}{12.0}{\rmdefault}{\mddefault}{\updefault}{\color[rgb]{0,0,0}$\ell_2$}%
}}}}
\put(10416,-3016){\makebox(0,0)[lb]{\smash{{\SetFigFont{10}{12.0}{\rmdefault}{\mddefault}{\updefault}{\color[rgb]{0,0,0}$\ell_5$}%
}}}}
\put(3321,-7471){\makebox(0,0)[rb]{\smash{{\SetFigFont{10}{12.0}{\rmdefault}{\mddefault}{\updefault}{\color[rgb]{0,0,0}$\ell_6$}%
}}}}
\put(10806,-3826){\makebox(0,0)[lb]{\smash{{\SetFigFont{10}{12.0}{\rmdefault}{\mddefault}{\updefault}{\color[rgb]{0,0,0}$\ell_7$}%
}}}}
\end{picture}%

\end{center}
You should do this computation yourself. To compare results, use
the labels in our figure.
\end{proof}

The eleven-point ``extended pentagon''
example is not minimal, as you are invited to find out
in the course of your computation.

\begin{oexercise}
  Show that the nine-point configuration obtained from deleting the
  points $p_6$ and $p_{11}$ also has the properties derived in
  Lemma~\ref{lemma:pentagon}.
\end{oexercise}

\section*{Non-rational polytopes}

Let $\CC$ be again a $2$-dimensional point configuration
consisting of $n$ points, and we assume that we have a
realization $\VV=\{v_1,\ldots,v_n\}$ of the configuration at hand. 
For the following, we should also
assume that all the points $v_i$ are distinct, that the 
$n$ points do not lie on one line, and that this holds ``stably'': If we
delete any one of the points, then the others should not lie
on a line. 

If $v\in\VV$ is any point in the configuration, a
\emph{Lawrence extension} is performed on~$v$ by replacing $v$
by two new points $\vb$ and~$\vbb$ on a line through $\vv$ that uses a new
dimension. That is, $\vv$, $\vb$ and $\vbb$ are to lie in this order on a
line $\ell$ that intersects the affine span of $\CC$ only
in~$\vv$.  Thus by this addition of two new points $\vb$ and $\vbb$ and
deletion of the ``old'' point~$\vv$, the dimension of a configuration
goes up by one, and so does the number of points.
We will iterate this, applying Lawrence extensions to all
points in the configuration $\CC$, one after the other.
\begin{center}
 \begin{picture}(0,0)%
\includegraphics{lawrence_ext1.pstex}%
\end{picture}%
\setlength{\unitlength}{2368sp}%
\begingroup\makeatletter\ifx\SetFigFont\undefined%
\gdef\SetFigFont#1#2#3#4#5{%
  \reset@font\fontsize{#1}{#2pt}%
  \fontfamily{#3}\fontseries{#4}\fontshape{#5}%
  \selectfont}%
\fi\endgroup%
\begin{picture}(5649,2906)(1939,-6378)
\put(4289,-3676){\makebox(0,0)[lb]{\smash{{\SetFigFont{12}{14.4}{\rmdefault}{\mddefault}{\updefault}{\color[rgb]{0,0,0}$\vbb$}%
}}}}
\put(5586,-5956){\makebox(0,0)[rb]{\smash{{\SetFigFont{12}{14.4}{\rmdefault}{\mddefault}{\updefault}{\color[rgb]{0,0,0}$\VV$}%
}}}}
\put(4701,-4418){\makebox(0,0)[lb]{\smash{{\SetFigFont{12}{14.4}{\rmdefault}{\mddefault}{\updefault}{\color[rgb]{0,0,0}$\vb$}%
}}}}
\put(5176,-5086){\makebox(0,0)[lb]{\smash{{\SetFigFont{12}{14.4}{\rmdefault}{\mddefault}{\updefault}{\color[rgb]{0,0,0}$\vv$}%
}}}}
\end{picture}%

\end{center}

\begin{definition}
  The \emph{Lawrence lifting} $\Lambda\VV$ of an $n$-point configuration $\VV$ 
is obtained by successively applying Lawrence extensions 
to \emph{all} the $n$ points of~$\VV$. 
Thus the Lawrence lifting of a
$2$-\allowbreak dimensional $n$-point configuration $\VV$
is a $(2+n)$-\allowbreak dimensional configuration that consists of $2n$ points.
\end{definition}

Lawrence has observed that this simple construction has a number of
remarkable properties.  
First, the order in which the Lawrence extensions
are performed does not matter, since they use independent ``new''
directions.
This may also be seen from a coordinate representation: If the $n$
points of $\VV$ are given by $(x_1^i,x_2^i)\in\R^2$, then
$\Lambda\VV$ is given by the rows of the $2n\times (2+n)$ matrix
\[
\left(\begin{array}{c}
\vb_1 \\ \vb_2 \\ \vdots \\ \vb_n \\
\vbb_1\\ \vbb_2\\ \vdots \\ \vbb_n 
\end{array}
\right)\ \ :=\ \ 
\left(
  \begin{array}{ccccccccccccccc}
    x_1^1 & x_2^1 & 1 &   &       &   \\
    x_1^2 & x_2^2 &   & 1 &       &   \\
   \vdots &       &   &   &\ddots &   \\
    x_1^n & x_2^n &   &   &       & 1 \\ 
    x_1^1 & x_2^1 & 2 &   &       &   \\
    x_1^2 & x_2^2 &   & 2 &       &   \\
   \vdots &       &   &   &\ddots &   \\
    x_1^n & x_2^n &   &   &       & 2 
  \end{array}
\right).
\]

Here $\vb_i$ and $\vbb_i$ arise by lifting $v_i$ into a new
$i$-th direction; the specific values $1$ and $2$ for the 
``lifting heights'' are not important, other positive values
would give equivalent configurations.

Next, the points $\vb_1,\dots,\vb_n,\vbb_1,\dots,\vbb_n$ of
$\Lambda\VV$ are in convex position, so they are the vertices of a
polytope.  Moreover, for each $i$ the pair of
vertices $\vb_i,\vbb_i$ forms an edge of this polytope $\conv\Lambda\VV$.
Indeed, it suffices to verify the last claim: 
From now on, let us denote the coordinates on $\R^{2+n}$ by 
$(x_1,x_2,y_1,\dots,y_n)$. Among the points of
$\Lambda\VV$, the points $\vb_i$ and $\vbb_i$ minimize the linear
functional $(y_1+\dots+y_n)-y_i$, which sums all ``new
variables'' except for the $i$th one. Thus $e_i=[\vb_i,\vbb_i]$ is an edge
of~$\Lambda\VV$, and its endpoints are vertices.

\begin{definition}
The \emph{Lawrence polytope} of the realized configuration $\VV\subset\R^2$
is the convex hull of its Lawrence lifting,
\[L(\VV)\ \  :=\ \ \conv\Lambda\VV\ \ \subset\ \ \R^{2+n}.
\]
\end{definition}

The vertices not on $e_i$, i.e.\ the set $\Lambda\VV\setminus\{\vb,\vbb\}$,
form the vertex set of a facet $F_i$ of the polytope $L(\VV)$: This is
since they all minimize the linear functional $y_i$, and span a
Lawrence polytope of dimension $1+n$.

Finally, let $\ell$ be any line of the original $2$-dimensional
configuration, which contains the points $\vv_i$ ($i\in I^0$), and has the points
$\vv_j$ ($j\in I^-$) on one side, and the points
$\vv_k$ ($k\in I^+$) on the other side,
for a partition $I^0\cup I^-\cup I^+=[n]$. Then there is a facet
$F^\ell$ of $L(\VV)$ with vertex set $V(F^\ell)\ =\ $
\[ 
\big\{ \vb_j        : j\in I^- \big\} \cup
\big\{ \vb_i,\vbb_i : i\in I^0 \big\} \cup  
\big\{       \vbb_k : k\in I^+ \big\}.
\]
To see this, let $l(x_1,x_2)=ax_1+bx_2+c$ be a linear function that
is  zero on $\vv_i$ ($i\in I^0$), 
negative on $\vv_j$ ($j\in I^-$), and 
positive on $\vv_k$ ($k\in I^+$). From this we can easily
write down a functional 
\[
\bar l(x_1,x_2,y_1,\dots,y_n) := 
 l(x_1,x_2) 
\ +\  \alpha_1 y_1+\dots+\alpha_n y_n
\]
that is zero on the purported vertices of $F_\ell$,
and positive on all other vertices of $L(\VV)$: By just
plugging in, you are led to set
\[ \alpha_j\ :=\ 
\begin{cases}    0        & \textrm{ for } j\in I^0,\\
          -\phantom{\frac12}l(x_1^j,x_2^j) & \textrm{ for } j\in I^-,\\
 -\tfrac12 l(x_1^j,x_2^j) & \textrm{ for } j\in I^+.
\end{cases}
\]
Finally, we check that the face $F_\ell$ indeed
has dimension $1+n$, so it defines a facet of $L(\VV)$.

Clearly if a configuration $\VV$ has rational coordinates 
then so do the Lawrence lifting $\Lambda\VV$ (see the matrix above)
and the Lawrence polytope $L(\VV)$.
Lawrence's remarkable observation was that the converse is true as well.

\begin{theorem}[Lawrence]\label{thm:Lawrence}
  Any realization of the Lawrence polytope $L(\VV)$ encodes a
  realization of~$\VV$.  Thus, if $L(\VV)$ has rational coordinates,
  then so does~$\VV$.
\end{theorem}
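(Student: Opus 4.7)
The plan is to invert the Lawrence construction: given any realization $L'$ of $L(\VV)$, I want to produce a realization of $\VV$ in $\R^2$ by ``projecting along the edges'' of $L'$.

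First I would identify the $n$ distinguished edges $e_i=[\vb_i,\vbb_i]$ from the combinatorics of $L'$ (as those edges whose removal leaves a facet, namely $F_i$), and show that the edge directions $d_i:=\vbb_i-\vb_i$ are linearly independent in $\R^{n+2}$. This holds because the $(n+1)$-dimensional affine hull $H_i$ of $F_i$ contains every edge $e_j$ with $j\neq i$ but not $e_i$ itself (its endpoints $\vb_i,\vbb_i$ are not vertices of $F_i$), so $d_i\notin\operatorname{span}(d_j:j\neq i)$; iterating over $i$ gives independence. Setting $D:=\operatorname{span}(d_1,\dots,d_n)$ and choosing a $2$-dimensional linear complement $D'\subset\R^{n+2}$, the projection $\pi\colon\R^{n+2}\to D'\cong\R^2$ along $D$ collapses each edge to a single point $v_i':=\pi(\vb_i)=\pi(\vbb_i)$.

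The bulk of the work is to verify that $v_1',\dots,v_n'$ realize $\VV$: for each combinatorial line $\ell$ with partition $I^0\cup I^-\cup I^+=[n]$, the points $v_i'$ with $i\in I^0$ must be collinear in $\R^2$. I would change coordinates to $(z_1,\dots,z_n,x_1,x_2)$ in which the $d_i$ are the first $n$ coordinate axes. Then the equation of $H_i$ involves only $z_i,x_1,x_2$ (its normal annihilates $d_j$ for $j\neq i$), and the equation of the supporting hyperplane $H^\ell$ of $F^\ell$ involves only the $z_l$ for $l\in I^-\cup I^+$ and $x_1,x_2$ (its normal annihilates $d_i$ for $i\in I^0$, since $H^\ell$ contains the edges $e_i$). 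Using the equations of the $H_j$ to eliminate the ``edge coordinates'' $z_l$ from the equation of $H^\ell$ then yields a single linear equation $A\,x_1+B\,x_2=C$ satisfied by every $v_i'$ with $i\in I^0$.

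The hard part is the non-degeneracy claim $(A,B)\neq(0,0)$: were it to fail, the normal of $H^\ell$ would equal a linear combination $\sum_{l\in I^-\cup I^+}\beta_l N_l$ of the facet normals $N_l$ of the $H_l$, with a matching constant term. Evaluating this identity at a vertex $\vb_l\in F^\ell$ with $l\in I^-$ --- which lies on $H^\ell$ and on every $H_{l'}$ with $l'\neq l$, but strictly off $H_l$ --- forces $\beta_l=0$; the symmetric argument at $\vbb_l$ for $l\in I^+$ kills the remaining $\beta_l$, so the normal of $H^\ell$ vanishes, contradicting that $H^\ell$ is a hyperplane. Rationality is then automatic: if $L'$ has rational vertex coordinates then the $d_i$ are rational, $D$ is a rational subspace of $\R^{n+2}$, one can pick a rational complement $D'$ and a rational isomorphism $D'\cong\R^2$ by Gaussian elimination over $\Q$, so $\pi$ is a rational linear map and each $v_i'=\pi(\vb_i)\in\Q^2$.
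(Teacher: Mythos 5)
Your strategy --- project along the $n$ distinguished edges to collapse each $e_i$ to a single planar point --- is close in spirit to the paper's proof, and your final non-degeneracy argument (evaluating a purported dependence of facet normals at the vertices $\vb_l$, $\vbb_l$) is essentially the paper's. But there is a genuine gap at the first step, and it recurs in the elimination step.

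You claim $d_1,\dots,d_n$ are linearly independent because $H_i$ contains every $e_j$ ($j\neq i$) but not $e_i$. This does not follow: $\operatorname{span}(d_j:j\neq i)$ lies in the \emph{linear part} of the affine hyperplane $H_i$, and the fact that $\vb_i,\vbb_i\notin H_i$ rules out $e_i\subset H_i$ but not $e_i$ being \emph{parallel} to $H_i$, i.e.\ $d_i$ lying in that linear part as well. And the conclusion can genuinely fail. In the standard coordinates for $L(\VV)$, the points $v_i=(x_1^i,x_2^i,0,\dots,0)$ that the construction is meant to recover lie in $\{y=0\}$, which is disjoint from $L(\VV)\subset\{\,\textstyle\sum_k y_k\ge 1\,\}$; choose a hyperplane $h$ through three non-collinear such $v_i$'s that avoids $L(\VV)$, and apply a projective transformation of $\R^{2+n}$ sending $h$ to infinity. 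The image is a bounded polytope $P$ of the same combinatorial type in which those three edge directions all lie in the $2$-dimensional linear part of $H_1\cap\dots\cap H_n$, hence are linearly dependent; equivalently the $z_l$-coefficient $f_l^{\mathrm{lin}}(d_l)$ of $H_l$'s equation vanishes for such $l$, which also blocks your elimination of $z_l$. This is exactly the degeneracy the paper sidesteps by homogenizing: it passes to the cone $C_P\subset\R^{3+n}$, intersects the \emph{linear} hyperplanes to get a $3$-dimensional linear subspace $R$ (a projective plane), meets $R$ with the edge spans $E_i$ to get a \emph{vector} configuration, and only then dehomogenizes in a freshly chosen affine chart. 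Your affine projection is implicitly tied to the chart $t=1$ and fails whenever some $v_i$ falls at infinity in that chart; to repair it you would have to homogenize as the paper does, or argue separately that a rational chart in general position can always be chosen.
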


\begin{proof}
Let $P\subset\R^{2+n}$ be a polytope with the combinatorial type of $L(\VV)$.
Somehow we have to start with $P$ and ``construct'' $\VV$ from it.

For this we homogenize, and let $C_P\subset\R^{3+n}$ be the
polyhedral cone spanned by~$P$.
Let $H_i\subset\R^{3+n}$ be the linear hyperplanes spanned
by the $n$ facets $F_i\subset P$ discussed above. 
The intersection 
\[
R\ \ :=\ \ H_1\cap\dots\cap H_n
\]
of these facets 
is a $3$-dimensional linear subspace of~$\R^{3+n}$: Indeed
the intersection of $n$ hyperplanes has co-dimension at most~$n$,
and the co-dimension cannot be smaller than $n$ since for each
$H_i$ there are vertices that are not contained in $H_i$, but 
in all the other hyperplanes $H_j$ (namely $\vb_i$ and $\vbb_i$).
The subspace $R$ is the \emph{space} 
where we will construct a vector representation of~$\VV$.

Let $E_i\subset\R^{3+n}$ be the $2$-dimensional linear subspace that is spanned
by the edge $e_i$ (that is, by $\vb_i$ and~$\vbb_i$). 
Now  $e_i$ is contained in $F_j$ for all $j\neq i$, but not in $F_i$; 
thus $E_i$ is contained in $H_j$ for all $j\neq i$, but not in~$H_i$.
So if we intersect $R$ with~$E_i$, we get a linear space
\[
R\cap E_i \ =\ H_i\cap E_i \ \ =:\ \ V_i
\]
that is $1$-dimensional. (The intersection of
a $2$-dimensional subspace with a hyperplane
that doesn't contain it is always $1$-dimensional. That's the beauty
of working in vector spaces, i.e.\ with homogenization!)
Let $v_i\in V_i\subset R$ be a non-zero vector. 
We claim that $v_1,\dots,v_n\in R$ give a \emph{vector representation}
of $\VV$ in~$R$.

For this, consider a line $\ell$ of the configuration $\VV$.
The corresponding facet $F^\ell\subset P$ (as described above) contains
the edges $e^i$ for $i\in I^0$, but \emph{not} the edges
$e^j$ for $j\in I^-\cup I^+$.
Thus if we intersect the hyperplane $H^\ell\subset\R^{2+n}$ with
$R$ we get a $2$-dimensional intersection that contains
$v_i$ ($i\in I^0$), but not $v_j$ ($j\in I^-\cup I^+$) --- otherwise
$H^\ell$ would contain $v_j$ as well as one of $\vb_j$ and $\vbb_j$, but not
the other one, which is impossible.
This completes the proof of the claim and of the theorem.
\end{proof}

\begin{corollary}
The Lawrence polytope $L(\VV_{11})$ derived from the extended pentagon configuration 
is a $13$-dimensional non-rational polytope with $22$ vertices:
It can be realized with vertex coordinates in~$\Q[\sqrt5]$, but not
with coordinates in~$\Q$.
\end{corollary}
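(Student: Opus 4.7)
The plan is to assemble the Corollary directly from the two pieces already proved in the excerpt: Lemma~\ref{lemma:pentagon} on the non-rationality of the extended pentagon configuration, and Theorem~\ref{thm:Lawrence} on the transfer of (non)rationality from a Lawrence polytope back to its underlying point configuration. Nothing new has to be calculated; the argument is a bookkeeping of dimensions and vertex counts together with two quick implications.

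First I would fix a realization $\VV_{11}\subset\R^2$ of $\CC_{11}$ with coordinates in $\Q[\sqrt5]$, whose existence is guaranteed by Lemma~\ref{lemma:pentagon}. I would then simply read off the numerics from the definition of the Lawrence lifting: for $n=11$ the matrix displayed in the paper has $2n=22$ rows in $\R^{2+n}=\R^{13}$, so $\Lambda\VV_{11}$ consists of $22$ points in $\R^{13}$. Since we already observed that every pair $\vb_i,\vbb_i$ forms an edge of $\conv\Lambda\VV_{11}$, all $22$ points are vertices, and the facets $F_i$ constructed earlier show that $L(\VV_{11})$ is full-dimensional in $\R^{13}$. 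The entries of that matrix lie in $\Q[\sqrt5]$ because the coordinates $(x_1^i,x_2^i)$ do and the extra ``lifting'' entries are the rational numbers $1$ and $2$, so this already produces a realization of $L(\VV_{11})$ with coordinates in $\Q[\sqrt5]$.

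For the non-existence of a rational realization, I would argue by contradiction: suppose $L(\VV_{11})$ could be realized with rational vertex coordinates. Then by Theorem~\ref{thm:Lawrence}, any such realization encodes a realization of the underlying configuration $\VV_{11}$, and inspecting the construction in that proof shows that the recovered vectors $v_i\in R$ live in a rational subspace spanned by rational hyperplane intersections; hence $\VV_{11}$ admits a rational vector realization and therefore, after dehomogenization with a rational linear form, a rational realization of $\CC_{11}$ in $\R^2$. This contradicts Lemma~\ref{lemma:pentagon}.

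I do not expect any serious obstacle here, since the corollary is essentially a packaging step. The only point that deserves a brief sanity check is that the rationality transfer in Theorem~\ref{thm:Lawrence} is genuine over $\Q$ (not merely over $\R$): one has to note that the hyperplanes $H_i$, their intersection $R$, and the edge-spans $E_i$ are all defined over $\Q$ as soon as the vertices of $P$ are, so the lines $V_i=R\cap E_i$ are rational and admit rational direction vectors $v_i$. Once this is observed, the chain ``rational polytope $\Rightarrow$ rational configuration'' goes through, Lemma~\ref{lemma:pentagon} supplies the contradiction, and the Corollary follows.
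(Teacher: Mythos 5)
Your proof is correct and follows the intended route: the corollary is the direct combination of Lemma~\ref{lemma:pentagon} with Theorem~\ref{thm:Lawrence}, together with the routine numerics ($22$ points spanning $\R^{13}$) read off from the Lawrence lifting matrix. Your added sanity check that the reconstruction in Theorem~\ref{thm:Lawrence} really stays over $\Q$ is a reasonable precaution but strictly redundant, since the theorem's statement already asserts that rational transfer explicitly.
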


\begin{oexercise}
 Construct a non-rational polytope with fewer vertices, and of smaller dimension.
\end{oexercise}

As a consequence of Richter-Gebert's work \cite{Rich4} we know that there
are even $4$-dimensional non-rational polytopes. Richter-Gebert's smallest
example has $33$ vertices.

\section*{Non-rational surfaces}

A \emph{polyhedral surface} $\Sigma\subset\R^3$ is composed
from convex polygons (triangles, quadrilaterals, etc.), which
are required to intersect nicely (that is, in a common edge,
a vertex, or not at all), and such that the union of all
polygons is homeomorphic to a closed surface (a sphere, a torus, etc.).

The basic ``gadget'' that we can use to build inherently non-rational
polyhedral surfaces from non-rational configurations is the 
``Toblerone torus'' --- a polyhedral nine-vertex torus built from nine
quadrilateral faces. As an abstract configuration, this is 
the surface that you get from a $3\times 3$ square by identifying
the points on opposite edges.
\[
\begin{picture}(0,0)%
\includegraphics{33torus.pstex}%
\end{picture}%
\setlength{\unitlength}{1302sp}%
\begingroup\makeatletter\ifx\SetFigFont\undefined%
\gdef\SetFigFont#1#2#3#4#5{%
  \reset@font\fontsize{#1}{#2pt}%
  \fontfamily{#3}\fontseries{#4}\fontshape{#5}%
  \selectfont}%
\fi\endgroup%
\begin{picture}(5210,4704)(1558,-5803)
\put(6076,-1786){\makebox(0,0)[lb]{\smash{{\SetFigFont{10}{12.0}{\rmdefault}{\mddefault}{\updefault}{\color[rgb]{0,0,0}$1$}%
}}}}
\put(6076,-5311){\makebox(0,0)[lb]{\smash{{\SetFigFont{10}{12.0}{\rmdefault}{\mddefault}{\updefault}{\color[rgb]{0,0,0}$1$}%
}}}}
\put(2251,-2911){\makebox(0,0)[rb]{\smash{{\SetFigFont{10}{12.0}{\rmdefault}{\mddefault}{\updefault}{\color[rgb]{0,0,0}$4$}%
}}}}
\put(2251,-4111){\makebox(0,0)[rb]{\smash{{\SetFigFont{10}{12.0}{\rmdefault}{\mddefault}{\updefault}{\color[rgb]{0,0,0}$7$}%
}}}}
\put(2251,-5311){\makebox(0,0)[rb]{\smash{{\SetFigFont{10}{12.0}{\rmdefault}{\mddefault}{\updefault}{\color[rgb]{0,0,0}$1$}%
}}}}
\put(2251,-1711){\makebox(0,0)[rb]{\smash{{\SetFigFont{10}{12.0}{\rmdefault}{\mddefault}{\updefault}{\color[rgb]{0,0,0}$1$}%
}}}}
\put(6076,-2911){\makebox(0,0)[lb]{\smash{{\SetFigFont{10}{12.0}{\rmdefault}{\mddefault}{\updefault}{\color[rgb]{0,0,0}$4$}%
}}}}
\put(6076,-4111){\makebox(0,0)[lb]{\smash{{\SetFigFont{10}{12.0}{\rmdefault}{\mddefault}{\updefault}{\color[rgb]{0,0,0}$7$}%
}}}}
\put(3601,-1411){\makebox(0,0)[b]{\smash{{\SetFigFont{10}{12.0}{\rmdefault}{\mddefault}{\updefault}{\color[rgb]{0,0,0}$2$}%
}}}}
\put(4801,-1411){\makebox(0,0)[b]{\smash{{\SetFigFont{10}{12.0}{\rmdefault}{\mddefault}{\updefault}{\color[rgb]{0,0,0}$3$}%
}}}}
\put(4801,-5686){\makebox(0,0)[b]{\smash{{\SetFigFont{10}{12.0}{\rmdefault}{\mddefault}{\updefault}{\color[rgb]{0,0,0}$3$}%
}}}}
\put(3601,-5686){\makebox(0,0)[b]{\smash{{\SetFigFont{10}{12.0}{\rmdefault}{\mddefault}{\updefault}{\color[rgb]{0,0,0}$2$}%
}}}}
\put(3676,-3211){\makebox(0,0)[lb]{\smash{{\SetFigFont{10}{12.0}{\rmdefault}{\mddefault}{\updefault}{\color[rgb]{0,0,0}$5$}%
}}}}
\put(4876,-3211){\makebox(0,0)[lb]{\smash{{\SetFigFont{10}{12.0}{\rmdefault}{\mddefault}{\updefault}{\color[rgb]{0,0,0}$6$}%
}}}}
\put(3676,-4411){\makebox(0,0)[lb]{\smash{{\SetFigFont{10}{12.0}{\rmdefault}{\mddefault}{\updefault}{\color[rgb]{0,0,0}$8$}%
}}}}
\put(4876,-4411){\makebox(0,0)[lb]{\smash{{\SetFigFont{10}{12.0}{\rmdefault}{\mddefault}{\updefault}{\color[rgb]{0,0,0}$9$}%
}}}}
\end{picture}%

\]
You might think of such a torus as a polyhedral surface as built in
$3$-space from three Toblerone$^{\scriptsize\textregistered}$ (Swiss chocolate) boxes,
which are long thin triangular prisms; think of the triangles 
at the ends as tilted (which is true for the chocolate bars, but
not for their boxes).
\[
\includegraphics[height=50mm]{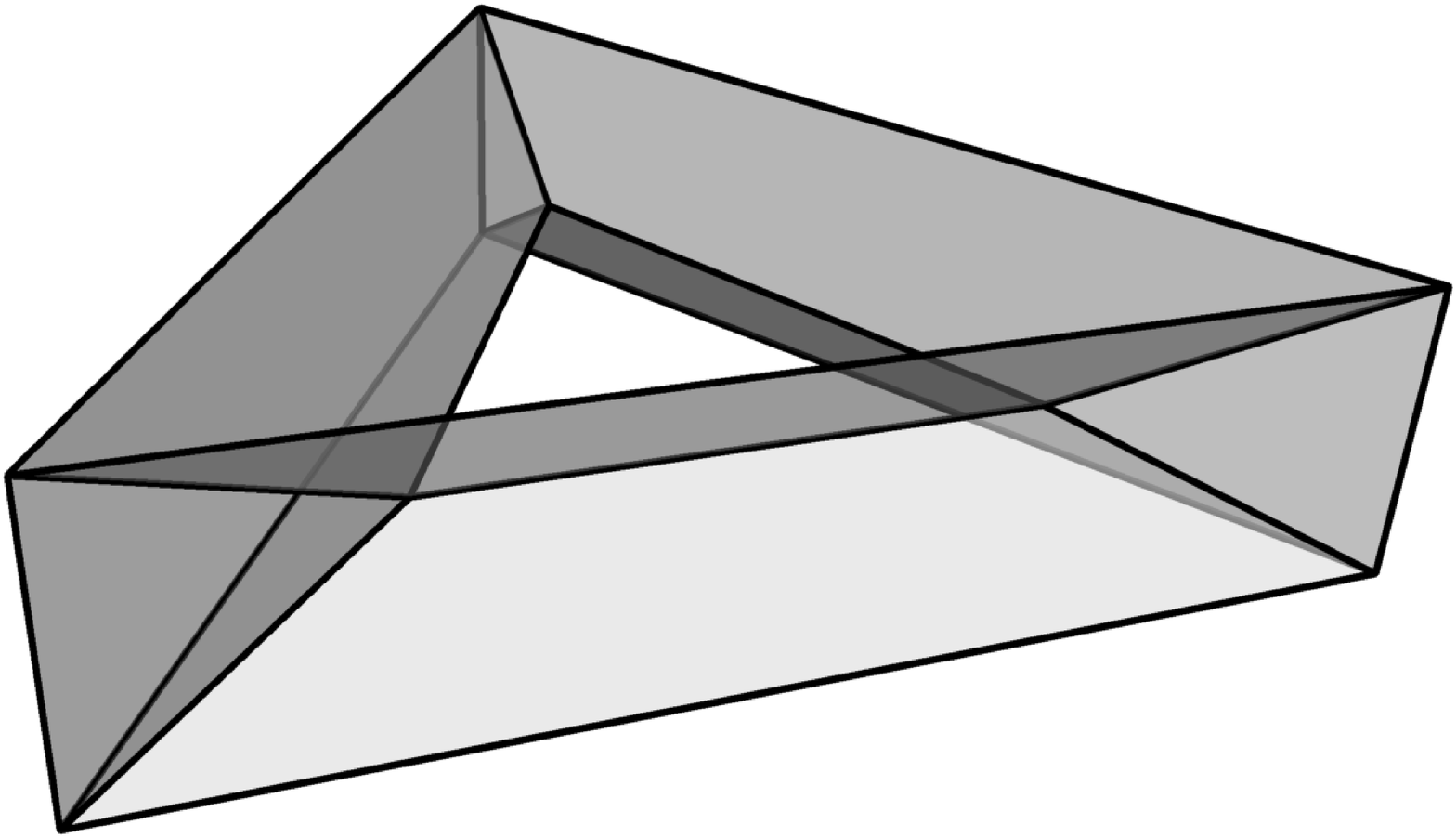}
\]

The key observation in this context is this:

\begin{lemma}[Simutis {\cite[Thm.~6, p.~43]{simutis77Diss}} 
\cite{gritzmann80-diss} \cite{RoerigPC}]
\label{lem:partial_toblerone}
  If you realize the toblerone torus in $\R^3$ with one quadrilateral missing,
then if the eight realized quadrilaterals are flat and convex, 
then the missing quadrilateral is necessarily flat, and 
it is necessarily convex.
\end{lemma}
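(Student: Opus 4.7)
The plan is to exploit the incidence geometry of the $3\times 3$ toroidal grid. Label the vertices $v_{ij}$ and faces $F_{ij}$ for $i,j\in\Z/3$ so that $F_{ij}$ has corners $v_{ij},v_{i+1,j},v_{i+1,j+1},v_{i,j+1}$; without loss of generality take $F_{22}$ to be the unrealized face. The nine faces decompose into three horizontal \emph{rows} $\{F_{0,j},F_{1,j},F_{2,j}\}$ and three vertical \emph{columns} $\{F_{i,0},F_{i,1},F_{i,2}\}$. Since $F_{22}$ is the only face in question, rows $0,1$ and columns $0,1$ each consist of three flat convex faces.

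The key geometric observation is that three flat faces arranged in a ``band'' (a full row or column of the torus) come with three supporting planes in $\R^3$ meeting pairwise along the three shared edges of the band. Any three distinct planes in $\R^3$ share a common projective point (at infinity if the planes are pairwise parallel), and convexity rules out the degenerate cases where two planes coincide or all three share a common line; hence the three shared edges of each fully flat band, extended to lines, must concur at a single projective point. The four fully flat bands thus yield four concurrency points $H_0,H_1$ from the rows and $V_0,V_1$ from the columns.

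To prove $F_{22}$ is flat, use these concurrencies to lock down $v_{22}$: the column-$1$ concurrency $V_1$ forces $v_{22}$ onto the line through $v_{12}$ and $V_1$, while the row-$1$ concurrency $H_1$ forces it onto the line through $v_{21}$ and $H_1$. Passing to homogeneous coordinates and normalizing via a projective frame built from four of the other vertices, one expresses the remaining vertices in terms of the concurrencies $H_0,V_0$ and the four flatness conditions of the adjacent faces $F_{02},F_{12},F_{20},F_{21}$. A direct computation then verifies that the four corners $v_{22},v_{02},v_{00},v_{20}$ of $F_{22}$ satisfy the coplanarity determinant, so $F_{22}$ is flat. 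Convexity of $F_{22}$ follows from convexity of its four flat neighbors: each edge of $F_{22}$ is a boundary edge of a convex flat neighbor, which fixes the turning direction at each vertex of $F_{22}$ and forces the diagonals of the now-planar $F_{22}$ to cross in its interior.

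The main obstacle is the coordinate verification at the heart of the argument---essentially a Desargues-type incidence identity adapted to the $3\times 3$ toroidal grid. An organizing principle is to view the three columns of vertices as three triangles placed in three-fold perspective through $H_0,H_1$, and to derive the needed additional concurrency via iterated applications of Desargues' theorem; but translating this into a clean self-contained proof is delicate, since the four given concurrencies and the four auxiliary coplanarities must conspire just so to yield the target.
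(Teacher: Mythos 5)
The paper does not actually prove this lemma --- it is cited to Simutis' thesis, Gritzmann's thesis, and a personal communication with R\"orig, and the text moves on directly to Brehm's constructions. So there is no proof in the paper to compare against, and the proposal must stand on its own.

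Your opening observation is genuinely useful and correct: a full row or column of the toroidal grid is a closed band of three flat quadrilaterals, so the three supporting planes must concur at a single projective point through which all three shared edges pass (modulo degeneracies). That gives the four concurrency points $H_0,H_1,V_0,V_1$ from the four fully-realized bands. But after this good setup, the argument has two genuine gaps.

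First, and most importantly, the flatness of $F_{22}$ is never actually proved. You write ``a direct computation then verifies that the four corners \dots satisfy the coplanarity determinant,'' but your own closing paragraph concedes that this verification is ``delicate'' and is ``the main obstacle.'' That computation is the entire content of the lemma; without it you have only a plausible plan. Moreover, the sentence that the column-$1$ concurrency ``forces $v_{22}$ onto the line through $v_{12}$ and $V_1$'' is circular as stated: $V_1$ is the common point of the three column-$1$ face planes, and the edge $v_{12}v_{22}$ is one of the three edges defining it, so $v_{22}\in v_{12}V_1$ is automatic from the flatness of $F_{11}$ and $F_{12}$ and carries no new constraint. The actual constraint on $v_{22}$ has to be extracted from a combination of the two partial bands (row 2 and column 2, each containing two flat faces) together with the four concurrencies, and this is exactly the incidence identity you have not established.

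Second, the convexity step does not follow from what you say. Convexity of a neighboring face such as $F_{21}$ is a condition on the four corners of $F_{21}$ in the plane of $F_{21}$; it says nothing about the interior angle of $F_{22}$ at the shared vertex, which lives in a different plane. ``Fixes the turning direction at each vertex of $F_{22}$'' is not justified. To get convexity you need to bring in the embeddedness of the partial surface (so that the boundary $4$-cycle of the eight realized faces is a simple closed polygon in the plane $\pi(F_{22})$) and some further argument to exclude the non-convex simple quadrilateral case; none of this appears. There is also a minor issue in the setup: convexity alone does not rule out two adjacent face planes coinciding (it only rules out all three sharing a line), so the concurrency point may need a more careful definition in the coplanar case.

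In short: the row/column concurrency idea is a good start, but the core coplanarity identity is asserted rather than proven, and the convexity deduction is not valid as written.
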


The missing face of such an eight-quadrilateral Toblerone torus may be 
prescribed to be any given convex flat
quadrilateral in $3$-space: By projective
transformations on $3$-space, any convex flat quadrilateral 
can be mapped to any other one.

Now consider the following planar $9$-point configuration:
It consists of three black convex quadrilaterals
$adih$, $bfid$, $cgfe$, and three grey shaded quadrilaterals 
$bdhi$, $bfge$, and $cegi$.
\[
\psfrag{$a$}{$a$}
\psfrag{$b$}{$b$}
\psfrag{$c$}{$c$}
\psfrag{$d$}{$d$}
\psfrag{$e$}{$e$}
\psfrag{$f$}{$f$}
\psfrag{$g$}{$g$}
\psfrag{$h$}{$h$}
\psfrag{$i$}{$i$}
\includegraphics[height=40mm]{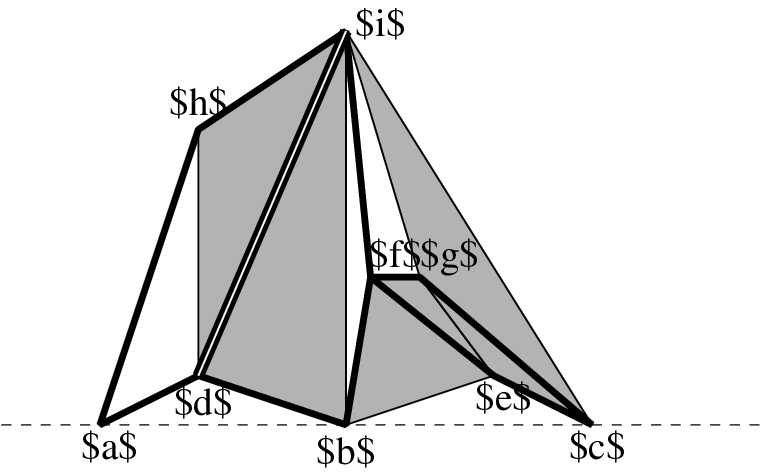}
\]
Think of this configuration as lying in a plane~$H$, 
and using projective transformations in $3$-space 
glue three toblerone tori with their missing 
faces onto the three black quadrilaterals, in such a way
that the three tori all come to lie on one side of the plane $H$.
Take three more Toblerone tori and glue them with their missing faces
onto the shaded grey quadrilaterals, on the other side of~$H$.

What you get is a partial polyhedral surface $S_{48}$, consisting
of $6\cdot 8=48$ convex quadrilaterals. It has
$9+6\cdot5=39$ vertices, among them the nine \emph{special} ones
which are labelled $a,b,\dots,i$. It could be
completed into a closed polyhedral surface by using
additional triangles and quadrilaterals, but let's
not do that for now.

\begin{lemma}[Brehm]
  In \emph{any} realization of the partial surface $S_{48}$,
  the $9$ special vertices $a,b,\dots,i$ lie in a plane.
\end{lemma}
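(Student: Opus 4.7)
The plan is to use Lemma~\ref{lem:partial_toblerone} to reduce the statement to a purely linear-algebraic fact about six planarity conditions on nine points in $\R^3$, and then to propagate planarity through a short chain of overlapping $4$-subsets. First I would apply Simutis's lemma to each of the six Toblerone tori glued into $S_{48}$: in any realization, the $8$ realized faces of each such torus are convex flat quadrilaterals, so the lemma forces the missing face to be flat and convex as well. Consequently, each of the six special quadrilaterals $adih$, $bfid$, $cgfe$, $bdhi$, $bfge$, and $cegi$ lies in a plane and is convex; in particular no three of its four vertices are collinear.

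Next I would chain these planarity conditions together using the elementary fact that two coplanar $4$-subsets sharing a non-collinear triple must lie in a common plane. The pair $adih$, $bdhi$ shares the triple $\{d,h,i\}$, and the pair $bfid$, $bdhi$ shares $\{b,d,i\}$; convexity of $bdhi$ (resp.\ $bfid$) makes both triples non-collinear, so $a$, $b$, $d$, $f$, $h$, $i$ all lie in a single plane~$\pi$. Symmetrically, $cgfe$, $bfge$ share $\{e,f,g\}$ and $cgfe$, $cegi$ share $\{c,e,g\}$; convexity of $cgfe$ makes these triples non-collinear, so $b$, $c$, $e$, $f$, $g$, $i$ all lie in a single plane~$\alpha$. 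To conclude, observe that $\pi$ and $\alpha$ share the three points $b$, $f$, $i$, which are three of the four vertices of the convex quadrilateral $bfid$ and hence non-collinear; therefore $\pi=\alpha$, and all nine special vertices lie on this common plane.

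I do not anticipate any deep obstacle once Simutis's lemma is invoked: the remaining work is the combinatorial bookkeeping of which four-element subsets share a triangle, and convexity conveniently supplies the non-collinearity required to merge two planes. The only thing that needs to be checked is that the six quadrilaterals are ``interlinked'' enough to force coplanarity, and the chain through the triples $\{d,h,i\}$, $\{b,d,i\}$, $\{e,f,g\}$, $\{c,e,g\}$, and finally the bridging triple $\{b,f,i\}$ does exactly that.
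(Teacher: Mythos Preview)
Your proof is correct and follows essentially the same route as the paper: invoke Lemma~\ref{lem:partial_toblerone} to make all six special quadrilaterals planar and convex, chain overlapping quadrilaterals to get the two six-point planes $\{a,b,d,f,h,i\}$ and $\{b,c,e,f,g,i\}$, and then merge them via the shared non-collinear triple $\{b,f,i\}$. The paper's proof is just a terser version of exactly this argument, leaving the chaining step as ``easy to see''.
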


\begin{proof}
  Indeed, by Lemma~\ref{lem:partial_toblerone}
  the six quadrilaterals are planar. It is easy to see
  that thus $a,h,d,b,i,f$ lie in one plane, and $c,e,f,g,b,i$ lie
  in one plane. Both planes contain $b,f,i$, and since they 
  cannot be collinear, the two planes coincide.
\end{proof}

Next, says Brehm, take three copies of the partial surface $S_{48}$,
and identify them in their copies of the vertices $a_j,b_j,c_j$,
(for $j=1,2,3$). This yields another partial surface $S_{144}$, 
consisting
of $3\cdot 48=144$ quadrilaterals and $3+3\cdot36=111$ vertices.

\begin{lemma}[Brehm]\label{lem9}
  In  \emph{any} realization of the partial surface $S_{144}$,
  the three special vertices $a,b,c$ lie on a line.
\end{lemma}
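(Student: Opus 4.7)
The plan is to argue by contradiction, paralleling the proof of the previous lemma. First I would suppose that, in some realization of $S_{144}$, the three shared vertices $a,b,c$ are not collinear. Applying the preceding lemma on $S_{48}$ to each of the three copies of $S_{48}$ embedded inside $S_{144}$ yields three planes $\pi_1,\pi_2,\pi_3\subset\R^3$, where $\pi_j$ contains all nine special vertices of the $j$-th copy. Each $\pi_j$ contains the triple $\{a,b,c\}$; since this triple is non-collinear by assumption, it spans a unique plane, so $\pi_1=\pi_2=\pi_3=:\pi$, and all $21$ special vertices of $S_{144}$ collapse into the single plane $\pi$.

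The remaining step, and the principal obstacle, is to derive a contradiction from this planar collapse. The plan is to exploit the three-dimensional geometry of $S_{144}$: each copy of $S_{48}$ carries six Toblerone tori protruding from its plane, three to each side. Collapsing the three planes to $\pi$ would require fitting eighteen Toblerone tori (nine on each side of $\pi$) attached to eighteen convex quadrilaterals lying in $\pi$, sharing only $a,b,c$ across the three copies. The hard part will be to show that no such three-dimensional arrangement exists without violating either convexity of all attaching quadrilaterals or non-self-intersection of the resulting polyhedral surface --- for instance, by analyzing the cyclic order, inside $\pi$, of the convex attaching quadrilaterals around each of the shared vertices $a$, $b$, $c$, and showing that three interlocking copies of the nine-point configuration cannot consistently share only three points while keeping all eighteen quadrilaterals convex.

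Equivalently, one could aim for the complementary formulation: prove directly that the three planes $\pi_1,\pi_2,\pi_3$ must be distinct in any valid realization of $S_{144}$. Three distinct planes sharing three common points must meet along a line containing those points, which immediately forces $a,b,c$ to be collinear. Either route isolates the same geometric obstacle --- namely, ruling out the degenerate ``all in one plane'' configuration --- but reduces everything else to the already-proved coplanarity statement for $S_{48}$.
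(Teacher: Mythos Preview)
Your reduction is correct: the previous lemma hands you three planes $\pi_1,\pi_2,\pi_3$, each containing $a,b,c$, and everything comes down to excluding the possibility $\pi_1=\pi_2=\pi_3$. You have located the crux precisely, but you have not actually argued it. Both of your paragraphs end by labelling this step ``the principal obstacle'' or ``the hard part'' and then \emph{proposing} an analysis (Toblerone tori on both sides, cyclic orders of attaching quadrilaterals around $a,b,c$, convexity constraints) without carrying any of it out. As it stands, the proposal is an outline with the decisive step left blank.

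The paper's resolution of this step is much lighter than what you are gearing up for, and it does not use the three-dimensional torus geometry at all. It is a pigeonhole remark inside the putative common plane: two of the nine-point configurations can share a plane, one sitting in an upper halfplane and one in a lower halfplane, but then there is no room for the third, which is therefore forced onto a different plane. So one only needs that \emph{at least two} of the $\pi_j$ are distinct --- your ``complementary formulation'' that all three be distinct is stronger than required and not what the paper claims. With two distinct planes through $a,b,c$, collinearity is immediate. In short: right skeleton, but replace the projected torus/convexity analysis by this brief planar-separation observation, and weaken ``all three planes distinct'' to ``not all three coincide''.
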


\begin{proof}
  Indeed, we know of three planes that the three vertices lie on.
  Two of these might coincide, where one $9$-point configuration
  could lie in the upper halfplane, and one in the lower half-plane,
  but the third configration then needs a different plane.
  Thus the three special vertices lie in the intersection of two~planes.
\end{proof}

From this it is quite easy to come up with, and to prove, Brehm's 
theorem: There are non-rational polyhedral surfaces!

\begin{theorem}[Brehm 1997/2007 
\cite{brehmOW2,brehm:_univer_theor_realiz_spaces_polyh_maps}]
  Glueing a copy of the partial surface $S_{144}$ into each of the
  collinear triples of the $11$-point pentagon configuration
  yields a partial surface that may be realized in 
  $\R^3$ with flat convex quadrilaterals.

  It may be completed into a closed, embedded polyhedral surface
  in $\R^3$ consisting of quadrilaterals and triangles, 
  all of whose vertex coordinates lie in~$\Q[\sqrt5]$.

  However, the partial surface (and hence the completed surface)
  does not have any rational realization.
\end{theorem}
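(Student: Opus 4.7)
The strategy is to use the $11$-point extended pentagon configuration $\CC_{11}$ as a ``template'' that forces collinearities in any realization. Place $\CC_{11}$ in a plane $H\subset\R^3$, glue a copy of Brehm's gadget $S_{144}$ into each of its collinear triples, and complete the resulting partial surface into a closed one. Non-rationality will then follow from Lemma~\ref{lem9} applied to each $S_{144}$, combined with Lemma~\ref{lemma:pentagon}.

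\textbf{Realization in $\Q[\sqrt5]$.} By Lemma~\ref{lemma:pentagon} we can place $\CC_{11}$ in $H$ with coordinates in $\Q[\sqrt5]$. For each collinear triple $\{p,q,r\}$ of $\CC_{11}$ we build a copy of $S_{144}$ whose three special vertices $a,b,c$ coincide with $p,q,r$. In the planar pattern of $S_{48}$ the vertices $a,b,c$ already lie on one of the lines of the $3\times 3$ grid, so a suitable projective transformation (defined over $\Q[\sqrt5]$) sends the pattern into a plane through the line $pqr$, with $a,b,c$ landing on $p,q,r$; each of the six Toblerone tori is then attached to its quadrilateral by a further projective transformation, and Lemma~\ref{lem:partial_toblerone} guarantees that each attachment has a flat convex realization. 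Three such copies of $S_{48}$, lying in three distinct planes through the line $pqr$, are glued along $\{a,b,c\}$ to produce a copy of $S_{144}$. All of this is carried out over $\Q[\sqrt5]$. By rescaling each torus to be very thin in its transverse direction, the various gadgets (one per triple) can be made pairwise disjoint and disjoint from $H$ away from their anchoring vertices.

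\textbf{Completion.} The resulting partial surface has as boundary those edges of the planar patterns of the $S_{48}$'s that were not capped by a torus, together with arcs in $H$ around the pentagon configuration. Fill the remaining holes with a planar patchwork of triangles and quadrilaterals inside $H$, chosen with rational coordinates subject to the obvious combinatorics. Since every torus sits strictly off $H$, the cap meets the rest of the surface only along its prescribed boundary, and the completed object is a closed, embedded polyhedral surface in $\R^3$ with all coordinates in $\Q[\sqrt5]$.

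\textbf{Non-rationality.} Assume, toward a contradiction, a realization of the completed surface (or even just of the partial surface) with all vertex coordinates in $\Q$. The eleven template vertices are distinct vertices of an embedded polyhedral surface, hence distinct points of $\R^3$. For each copy of $S_{144}$ glued to a triple $\{p,q,r\}$, Lemma~\ref{lem9} forces its special vertices $a,b,c$ (i.e., $p,q,r$) to be collinear. Thus every collinearity prescribed by $\CC_{11}$ is realized by rational points, which contradicts Lemma~\ref{lemma:pentagon}.

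\textbf{Main obstacle.} The delicate step is the embedding: the many copies of $S_{144}$, each splaying out of $H$ along three half-planes around a common line, must be arranged pairwise non-crossing and also disjoint from the planar cap. Uniform thinness of the tori handles most crossings, but verifying that gadgets attached to intersecting lines of $\CC_{11}$ do not interfere, and that the cap combinatorics close up into an orientable surface, is the part that requires genuine care; everything else is essentially algebraic bookkeeping.
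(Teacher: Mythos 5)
The paper itself provides no proof of this theorem --- it merely remarks, after Lemma~\ref{lem9}, that ``it is quite easy to come up with, and to prove, Brehm's theorem'' --- so your proposal is filling a genuine gap rather than paralleling an existing argument. Your three-part outline (build a $\Q[\sqrt5]$ realization by gluing $S_{144}$ gadgets onto a planar copy of $\CC_{11}$, complete to a closed surface, derive non-rationality from Lemma~\ref{lem9} plus Lemma~\ref{lemma:pentagon}) is exactly the route the paper gestures at, and your candid flagging of the embedding as the hard part is appropriate.

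One substantive point your non-rationality step glosses over deserves attention. Lemma~\ref{lemma:pentagon} rules out \emph{realizations} of $\CC_{11}$ over~$\Q$, where --- as the paper explicitly warns --- ``realization'' demands not only that the ten prescribed collinearities hold but also that no \emph{additional} collinearities occur (otherwise eleven rational collinear points, or three clusters at the corners of a triangle, would already be rational ``realizations''). In your contradiction, you extract eleven rational points from a hypothetical rational surface and apply Lemma~\ref{lem9} to get the prescribed collinearities, but you only verify distinctness of the eleven points, not the required non-degeneracy. The proof of Lemma~\ref{lemma:pentagon} in fact begins by assuming $v_1,v_2,v_9,v_{10}$ form a projective basis; you would need to derive this kind of genericity from the surface data --- for instance from the fact that the convex quadrilaterals in the realization are non-degenerate, so certain point triples cannot be collinear. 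This is closeable, but it is the step your argument as written does not do. A smaller point: the five diagonals of the pentagon carry \emph{four} collinear points each, so ``each collinear triple'' means several overlapping copies of $S_{144}$ per diagonal; you should say this, though it is not an obstacle.
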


Indeed, Lemma~\ref{lem9} represents already a major step on the
way to Brehm's universality theorem for polyhedral surfaces.

\section*{A glimpse of universality}

Following venerable traditions for example from Algebraic Geometry
(where one speaks of 
``moduli spaces'') it is natural and profitable to study not only
special realizations for discrete-geometric structures such as
configurations, polytopes or polyhedral surfaces, but also the 
\emph{space of all correct coordinatizations}, 
up to affine transformations, which is known
as the \emph{realization space} of the structure.

Why is this set a ``space'', what is its structure?  If we consider a
planar $n$-point configuration $\CC$, then a realization is given by
an ordered set of vectors $w_1,\dots,w_n$, which form the rows of a
matrix $W\in\R^{n\times2}$.  Thus a certain subset of the vector space
$\R^{n\times2}$ of all $2\times n$ matrices corresponds to ``correct''
realizations $W$ of ``our'' configuration~$\CC$.

In all three cases (configurations, polytopes, surfaces) the set of
correct realizations is a 
\emph{semi-algebraic set} (more precisely, a
primary semi-algebraic set defined over~$\Z$): It can be
described as the solution set of a finite system of polynomial
equations and strict inequalities in the coordinates, with 
integral coefficients.  For example, in the case of
configurations we specify for every triple $\vv_i,\vv_j,\vv_k$ 
that $\det(\vv_i,\vv_j,\vv_k)^2$ should be either zero or to be positive, which
amounts to a biquadratic equation resp.\ strict inequality in the
coordinates of $w_i,w_j$ and~$w_k$.

Any affine coordinate transformation corresponds to a 
column operation on the matrix $M\in\R^{n\times2}$.
So the realization space can be described as a quotient
of the set of all realization matrices by the action 
of the group of affine transformations. From this point of view,
it is not obvious that the realization space is a semi-algebraic
set. If, however, equivalently we fix an affine basis
(which in the plane means: fix the coordinates for three
non-collinear points to be the vertices of a specified triangle),
then this becomes clear.

\begin{proposition}[see Gr\"unbaum \cite{Gr1-2}]
  The realization space of any configuration, polytope or polyhedral
surface is a semi-algebraic set.
\end{proposition}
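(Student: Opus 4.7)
The plan is to handle all three cases uniformly. In each, I first rigidify the action of the affine group by prescribing coordinates for a fixed affine basis (this is possible because any non-degenerate realization contains such a basis, and any two realizations differing by an affine transformation meet this slice in a unique common point). The rigidified realization space is then a slice of a Euclidean space of free parameters, and I then translate each combinatorial condition into a polynomial equation with integer coefficients, and each negated condition into a strict polynomial inequality, using squared determinants to convert $\det\neq 0$ into $\det^2>0$. Taking the conjunction produces a primary semi-algebraic set over $\Z$.

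For a planar configuration $\CC$ on points $p_1,\dots,p_n$, I pick three of its declared non-collinear points and prescribe their homogeneous coordinates to be, say, $e_1,e_2,e_3$. The remaining $n-3$ vectors $v_i=(1,x_i,y_i)$ contribute $2(n-3)$ free real parameters. For each triple $\{i,j,k\}$, the function $D_{ijk}:=\det(v_i,v_j,v_k)$ is a polynomial with integer coefficients in the free parameters; the condition list prescribes the equation $D_{ijk}=0$ when the triple is collinear in $\CC$ and the strict inequality $D_{ijk}^2>0$ otherwise. That's exactly a primary semi-algebraic description.

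For a polytope of prescribed combinatorial type, I similarly fix a projective basis among its vertices, so the remaining vertex coordinates are the free parameters. For each prescribed facet $F$ with vertex set $V(F)$, Cramer's rule expresses the coefficients of the unique affine hyperplane through any $d$ affinely independent vertices of $F$ as polynomials (determinants) in the vertex coordinates; I then record the equations $\ell_F(v)=0$ for all $v\in V(F)$ and the strict inequalities $\ell_F(v)>0$ (with the sign fixed by convention) for $v\notin V(F)$, which encodes both ``$V(F)$ spans a face'' and ``$F$ is a proper facet''. For polyhedral surfaces, each prescribed 2-face being a flat convex polygon is expressed by coplanarity determinants (equations) together with convex-position determinants of a fixed sign (strict inequalities); the combinatorial intersection pattern of distinct faces (sharing exactly an edge, a vertex, or nothing) and the embedding property impose a further finite list of determinantal equations for prescribed incidences and strict inequalities for prescribed non-incidences.

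The main obstacle is not any one equation but the bookkeeping: one must check that the ``strict'' half of the description actually rules out every degeneration compatible with the equation half --- in particular, for polytopes one must include the correct sign inequality for every non-incident vertex (so that a supporting hyperplane is genuinely a facet plane, not a hyperplane slicing through $P$), and for surfaces one must add strict inequalities to forbid face intersections beyond the prescribed ones. Once this enumeration is carried out, every individual condition is, by construction, a polynomial equation or strict polynomial inequality with integer coefficients in the coordinates of the rigidified slice, and the realization space is their common solution set, which is by definition a primary semi-algebraic set defined over $\Z$.
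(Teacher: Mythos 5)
Your proposal matches the paper's own argument: rigidify the affine action by pinning down an affine basis, and then observe that each incidence or non-incidence condition becomes a polynomial equation or a strict squared-determinant inequality with integer coefficients, so the realization space is a primary semi-algebraic set over $\Z$. One small slip worth fixing: prescribing three points to have homogeneous coordinates $e_1,e_2,e_3$ neither fixes a projective basis (that takes four points in general position) nor an affine one (with the paper's convention $v_i=(1,x_i,y_i)$, the vectors $e_2,e_3$ lie on the plane at infinity); for the affine normalization the paper intends, pin the three non-collinear points to, say, $(1,0,0)$, $(1,1,0)$, $(1,0,1)$.
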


Semi-algebraic sets can be complicated: They can 
\begin{compactitem}[$\bullet$]
\item be empty, e.g.\ $\{x\in\R: x^2<0\}$,
\item be disconnected, e.g.\ $\{x\in\R: x^2>1\}$,
\item contain only irrational points,  $\{x\in\R: x^2=5\}$,
\end{compactitem}
etc. Indeed, this can easily be strengthened:
Semialgebraic sets have quite arbitrary homotopy types,
singularities, or need points from large extension fields of~$\Q$.

But can realization spaces for combinatorial structures be
so complicated and ``wild''?

It is a simple exercise to see that the realization space
for a convex $k$-gon $P\subset\R^2$ has a very
simple structure (equivalent to $\R^{2k-6}$).
Moreover, Steinitz \cite{Stei1,StRa} proved in 1910
that the realization space for every $3$-dimensional polytope
is equivalent to $\R^{e-6}$, where $e$ is the number of edges of~$P$. 
In particular, it contains rational points.
A similar result was also stated for general polytopes 
\cite{Robert} --- but it is not true.

A \emph{universality theorem} now mandates that the realization spaces
for certain combinatorial structures are as wild/\allowbreak
complicated/\allowbreak interesting/\allowbreak strange as arbitrary
semi-algebraic sets.

A blueprint is the universality theorem for oriented matroids by
Nikolai Mn\"ev, from which he also derived a universality
theorem for $d$-polytopes with $d+4$ vertices:

\begin{theorem}[Mn\"ev 1986 \cite{Mnev1,Mnev2}] For every
  semi-algebraic set $S\subset\R^N$ there is for some $d>2$ a
  $d$-polytope $P\subset\R^d$ with $d+4$ vertices whose realization
  space ${\mathcal R}(P)$ is ``stably equivalent'' to~$S$.
\end{theorem}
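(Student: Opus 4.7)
The plan is a two-step reduction: first prove universality for planar incidence configurations, then translate into polytopes via Gale duality. The reason $d+4$ vertices is exactly right is a dimension count: a $d$-polytope with $n = d+4$ vertices has Gale diagram in $\R^{n-d-1} = \R^3$, which after dehomogenization is a $2$-dimensional ("affine Gale") configuration. Thus there is a dictionary between combinatorial types of $(d+4)$-vertex $d$-polytopes and sign patterns (oriented matroids) of planar point-vector configurations, and — more importantly — this dictionary identifies the realization space of $P$ with that of its Gale diagram up to a stable equivalence (quotienting by choice of projective frame and the affine action). This is exactly the mechanism that Perles used in the excerpt to build the non-rational $8$-polytope from the extended pentagon: a $12$-vertex $8$-polytope has a $3$-dimensional Gale diagram, i.e.\ a planar configuration.

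So I would reduce the theorem to the statement: \emph{for every primary semi-algebraic set $S\subset\R^N$ defined over $\Z$, there exists a planar incidence configuration $\CC_S$ (with the appropriate sign data, i.e., a rank-$3$ oriented matroid) whose realization space is stably equivalent to $S$}. The idea, going back to von Staudt and already visible in the extended pentagon of Example~\ref{ex:pentagon} (which enforces the polynomial equation $a^2-4a-1=0$), is to build small incidence gadgets — an "addition gadget" and a "multiplication gadget" — whose collinearity constraints force chosen coordinates to satisfy $x+y=z$ or $xy=z$. Given a semi-algebraic description of $S$ by polynomial equations and strict inequalities with integer coefficients, I would concatenate these gadgets (sharing a common projective frame so that the encoded coordinates are comparable) into one big configuration $\CC_S$. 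Realizations of $\CC_S$, modulo the projective frame, are then in bijection with points of $S$; the strict inequalities are encoded by the \emph{oriented} matroid data (on which side of each line each point lies). Step $2$ then is routine bookkeeping: pass to the Gale dual of $\CC_S$ (a vector configuration in $\R^{n-3}$), read it as the Gale diagram of a polytope $P_S$ with $n$ vertices in dimension $d = n-3$, i.e.\ with $d+4$ vertices, and verify that the Gale-duality correspondence is in fact a stable equivalence of realization spaces.

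The hard part is manifestly the oriented-matroid universality of Step $1$. The purely combinatorial (unoriented) version is already in MacLane \cite{maclane36:_some}, but Mn\"ev's contribution is to make the arithmetic gadgets work \emph{with signs}: the addition and multiplication gadgets must be designed so that not only the right collinearities hold but also the points lie on the prescribed sides of the auxiliary lines, so that one genuinely encodes real (possibly positive) numbers and the strict inequalities of $S$ are represented faithfully. One also has to verify that the map from realizations of $\CC_S$ to points of $S$ is not merely a bijection but a stable equivalence — a composition of rational homeomorphisms and projections with contractible fibres, defined over $\Z$ — which forces careful tracking of all auxiliary coordinates that the gadgets introduce and a proof that each of them can be eliminated by a fibration with a contractible, rationally parametrized fibre. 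Once this is under control, the passage from $\CC_S$ to the polytope $P_S$ via Gale duality and the resulting vertex count are essentially formal.
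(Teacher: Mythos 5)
The paper does not actually prove Mn\"ev's theorem; it cites \cite{Mnev1,Mnev2} and, in the paragraph following the statement, gives only a three-line roadmap: von~Staudt constructions to encode polynomial systems, a ``standard form'' reduction \`a la Shor~\cite{Shor}, and a workable notion of stable equivalence (as in Richter-Gebert~\cite{Rich4}). Your sketch follows the same route, and is actually more explicit about the one step the paper leaves tacit --- that the $d+4$ vertex count is precisely the Gale-diagram dimension count putting you in the plane, so that the whole thing reduces to a rank-$3$ oriented-matroid universality theorem. That matches what the paper implies by placing the theorem in the ``few-vertices'' Gale-diagram tradition of Perles.

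Two places where your sketch glosses over genuine technical obstacles worth naming. First, ``pass to the Gale dual of $\CC_S$ and read it as the Gale diagram of a polytope'' is not automatic: the Gale dual of an arbitrary planar configuration is a vector configuration in $\R^{n-3}$, but it corresponds to a \emph{polytope} only if it satisfies the polytopality condition (every cocircuit has at least two positive elements, equivalently the origin stays interior after deleting any one vector). Mn\"ev's and Richter-Gebert's constructions have to arrange this explicitly, typically by adding auxiliary points to $\CC_S$ or choosing the frame carefully, and this is where the final value of $d$ gets determined. Second, ``concatenate the gadgets sharing a common projective frame'' is where the paper's pointer to Shor's \emph{standard form} lives: the von~Staudt addition and multiplication gadgets only compose correctly if the polynomial system is first normalized so that each step involves quantities of controlled sign and magnitude and the intermediate variables appear in a fixed order. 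Without that normalization the sign conditions you need for the oriented-matroid encoding (and hence for convex position after Gale dualizing) cannot be guaranteed. Neither gap invalidates your outline --- it agrees with the paper's roadmap --- but both are load-bearing steps in the actual proof, not ``routine bookkeeping.''
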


Such a result of course implies that there are non-rational polytopes,
that there are polytopes that have realizations that cannot be deformed
into each other (counterexamples to the ``isotopy conjecture''), etc.
(Here we consider the realization space of the whole polytope, 
not only of its boundary, that is, we are considering \emph{convex}
realizations only.)

To prove such a result, a first step is to 
find planar configurations that encode general polynomial systems;
the starting point for this are the ``von Staudt constructions'' 
\cite[2.~Heft]{staudt56:_beitr_geomet_lage} from the 19th century, which
encode addition and multiplication into incidence configurations.
This produces systematically examples such as the pentagon configuration
that we discussed.
Then one has to show that all real polynomial systems can be
brought into a suitable ``standard form'' (compare Shor \cite{Shor}),
develop a suitable concept of ``stably equivalent'' (compare Richter-Gebert \cite{Rich4}), and then go on.

In the last 20 years a number of substantial universality theorems
have been obtained, each of them technical, each of them a considerable 
achievement.
The most remarkable ones I know of today are the universality theorem for
$4$-dimensional polytopes by Richter-Gebert \cite{Rich4} (see also
G\"unzel \cite{Guenzel}), 
a universality for \emph{simplicial} polytopes by Jaggi et al.\ \cite{JMSW},
universality theorems for planar mechanical linkages by
Jordan \& Steiner \cite{jordan99:_config} 
and Kapovich \& Millson \cite{kapovich02:linkagage}, 
and the universality theorem for polyhedral surfaces by 
Brehm (to be
published~\cite{brehm:_univer_theor_realiz_spaces_polyh_maps}).

\section*{Four problems}

In the last forty years, there have been fantastic discoveries in the 
construction of non-rational examples, in the
study of rational realizations, and in the
development of universality theorems.
However, great challenges remain --- we take the
opportunity to close here with naming four.

\subsection*{Small coordinates}

According to Steinitz, every $3$-dimensional polytope can be
realized with rational, and thus also with integral vertex coordinates.
However, are there \emph{small} integral coordinates?
Can every $3$-polytope with $n$ vertices be realized with coordinates in
$\{0,1,2,\dots,p(n)\}$, for some polynomial $p(n)$?
Currently, only exponential upper bounds like
$p(n)\le {533}^{n^2}$ are known,
due to Onn \& Sturmfels~\cite{OnSt}, Richter-Gebert
\cite[p.~143]{Rich4}, and finally Rib\'o Mor \& Rote, see
\cite[Chap.~6]{Ribo-diss}.

\subsection*{The bipyramidal 720-cell}

It may well be that non-rational polytopes occur ``in nature''.
A good candidate is the ``first truncation'' of the regular
$600$-cell, obtained as the convex hull of the mid points of the 
edges of the $600$-cell, which has $600$ regular octahedra
and $120$ icosahedra as facets. 
This polytope was apparently already studied by Th.~Gosset in 1897;
it appears with notation 
$\left\{\begin{smallmatrix}3\\3,&5\end{smallmatrix}\right\}$
in Coxeter \cite[p.~162]{Coxeter}. Its dual,
which has $720$ pentagonal bipyramids as facets, is the   
$4$-dimensional \emph{bipyramidal 720-cell} of Gevay \cite{Gevay} \cite{Z89}. 
It is neither simple nor simplicial. 

Does this polytope (equivalently: its dual) have
a realization with rational coordinates?

\subsection*{Non-rational cubical polytopes}

As argued above, it is easy to see that all types of simplicial $d$-dimensional
polytopes can be realized with rational coordinates:
``Just perturb the vertex coordinates''. 
For \emph{cubical} polytopes, all of whose faces are
combinatorial cubes, there is no such simple argument.
Indeed, it is a long-standing open problem whether every
cubical polytope has a rational realization. This is true for 
$d=3$, as a special case of Steinitz's results. 
But how about cubical polytopes of dimension~$4$?
The boundary of such a polytope consists of combinatorial $3$-cubes;
its combinatorics is closely related with that of immersed cubical 
surfaces~\cite{Z91}.

On the other hand, if we impose the condition that the
cubes in the boundary have to be affine cubes --- 
so all $2$-faces are centrally symmetric --- then 
there are easy non-rational examples, namely the 
\emph{zonotopes} associated to non-rational configurations \cite[Lect.~7]{Z35}.

\subsection*{Universality for simplicial 4-polytopes}

There are universality theorems for
simplicial $d$-dimensional polytopes with $d+4$ vertices, and for 
$4$-dimensional polytopes.
But how about universality for simplicial $4$-dimensional polytopes?

The realization space for such a polytope is an \emph{open}
semi-algebraic set, so it certainly contains rational points, and
it cannot have singularities. 
One specific ``small'' simplicial $4$-polytope with $10$ vertices 
that has a combinatorial symmetry, but no symmetric realization, 
was described by Bokowski, Ewald \& Kleinschmidt in 1984 \cite{BEK};
according to Mn\"ev~\cite[p.~530]{Mnev1} 
and Bokowski \& Guedes de Oliveira \cite{BoGO} this example
does not satisfy the isotopy conjecture, that is,
the realization space is disconnected for this example.
Are there $4$-dimensional simplicial polytopes with
more/arbitrarily complicated homotopy types?

\end{multicols}
\begin{small}

\end{small}
\end{document}